\newtheorem{definition}{Definition}
\newtheorem{lemma}{Lemma}
\newtheorem{theorem}{Theorem}
\newtheorem{proposition}{Proposition}
\newtheorem{remark}{Remark}
\newcommand\RE{\mathbb{R}}
\newcommand\PA{\mathbb{P}_\mathrm{a}}
\newcommand\PS{\mathbb{P}_\mathrm{s}}
\newcommand\rd{\mathrm{d}}
\newcommand\re{\mathrm{e}}
\newcommand\MA{\mathcal{M}}
\newcommand\SL{\mathcal{D}}
\newcommand\so{\mathfrak{so}}
\DeclareMathOperator{\vex}{vex}
\DeclareMathOperator{\tr}{tr}
\DeclareMathOperator{\diag}{diag}
\DeclareMathOperator{\sign}{sign}
\title{\LARGE \bf
 Sliding Motions on $SO(3)$, \\ Sliding Subgroups
}
\author{Gian C. G\'omez Cort\'es, Fernando Casta\~nos and Jorge D\'avila%
 \thanks{G. C. G\'omez Cort\'es and Fernando Casta\~nos are with the Automatic Control Department, 
  Cinvestav-IPN, Mexico City, Mexico.
  {\tt\small ggomez@ctrl.cinvestav.mx}, {\tt\small castanos@ieee.org}
 }
 \hspace*{ 0.5 in}
 \thanks{Jorge D\'avila is with Section of Graduate Studies and Research IPN, ESIME-UPT, Mexico City, Mexico.
  {\tt\small jadavila@ipn.mx}
 }
}
\begin{document}

\maketitle
\thispagestyle{empty}
\pagestyle{empty}

\begin{abstract}
 We propose a sliding surface for systems on the Lie group $SO(3)\times\RE^3$.
 The sliding surface is shown to be a Lie subgroup. The reduced-order dynamics
 along the sliding subgroup have an almost globally asymptotically stable equilibrium.
 The sliding surface is used to design a sliding-mode controller for the attitude
 control of rigid bodies. The closed-loop system is robust against matched disturbances
 and does not exhibit the undesired unwinding phenomenon.
\end{abstract}

\section{INTRODUCTION}

The robustness of sliding-mode controllers is a well-known feature that has
been extensively documented in the literature. For systems with Euclidean state
spaces, the main design principle is well understood: constrain the system trajectories to a,
usually, linear sliding surface. Along the sliding surface, the equations of motion
are of reduced order and, more importantly, completely independent of matched
disturbances. By the linear nature of the sliding surface, the state space
of the reduced-order system inherits the Euclidean nature of the full-order
system.  

However, there are systems which do not evolve naturally on Euclidean spaces.
The state space may not be linear, but it may possess other 
important algebraic structures. Such is the case, e.g., of state spaces
which are Lie groups, that is, state spaces which are smooth and satisfy
the group axioms for a given smooth product. An interesting example 
is $SO(3)$, which is the natural configuration space for the attitude 
of rigid bodies. In such a case it seems more intuitive to design a sliding
surface which retains the Lie group structure of the full-order system,
in other words, a sliding surface which is a Lie subgroup (a \emph{sliding
subgroup}), rather than a linear subspace. In this paper,
we propose such a sliding surface for the control of the attitude of
rigid bodies.  

The rigid body idealization has a considerable amount of applications:
robots, spacecraft, aircraft, satellites and underwater vehicles can
be modeled as rigid bodies~\cite{schaub,murray}. Problems such as attitude
determination, control and estimation can be solved using rigid body
models~\cite{markley,schaub}.

The motion of a rigid body is represented via Euler equations of motion
and kinematic equations which depend on the attitude parametrization~\cite{murray,shuster1993}.
The attitude of a rigid body can be defined using different representations such as
rotation matrices, Euler angles, Rodrigues parameters, modified Rodrigues parameters,
quaternions, axis-angle, and many others. Surveys of the attitude representations are
given in~\cite{shuster1993} and~\cite{phillips2001}, where the relations between attitude parametrizations
and kinematic equations are detailed.

A natural, unique, and global way to describe the rigid-body attitude is by means of rotation
matrices. These matrices form a matrix group called Special Orthogonal, which in the case of
3-D motions is denoted as $SO(3)$. Many researchers have studied the control problem
in $SO(3)$. Within the literature, we can mention~\cite{brockett1973}, where 
properties like controllability and observability of systems defined on spheres are treated.
A proportional-derivative control scheme for systems defined on $SO(3)$ and $SE(3)$ is given
in~\cite{bullo1995}, and solutions to the output regulation problem for systems defined on
matrix Lie groups are presented in~\cite{mahony2015,marconi2018}.

One peculiarity of the control problem in $SO(3)$ is the impossibility to achieve global 
stability results using continuous time-invariant feedback, due to the topology of
$SO(3)$~\cite{koditschek1989,bhat2000,chaturvedi2011}. This problem calls for the notion
of almost-global stability, defined in~\cite{koditschek1989} and which is typically needed
in systems with rotational motion. Within the works which emphasize the almost-global
stability property, we find~\cite{chaturvedi2011,mahony2015,bullo1995,sanyal2009,pong2015,bullomurray1995}.
The attitude control problem for different systems is considered and almost-global stability
is achieved. 

\subsection{Contributions}

We propose a sliding surface parametrized with rotational matrices. This automatically
ensures that the closed-loop system is globally well defined and that it does not
exhibit the so-called unwinding phenomenon. We show that the sliding surface is 
a Lie group, so it inherits the topological and algebraic properties of the system
state space. We show that the controller achieves almost global asymptotic stability,
being the global asymptotic stability impossible in the light of the non-contractiveness
of $SO(3)$.

\subsection{Paper Structure}

In Section~\ref{sec:motivation} we consider the attitude control problem about a single 
axis of rotation. We show that, if we insist on a linear surface in a Euclidean coordinate
chart, the sliding surface will necessarily break into disjoint subsets. Section~\ref{sec:prel}
contains all the preliminary material, mostly about $SO(3)$ and the dynamics of a rotating 
rigid body. The controller is proposed and the stability of the closed loop system is analyzed 
in Section~\ref{sec:main}. The performance of the controller is further assessed with simulations in
Section~\ref{sec:simulation}. Conclusions are given in the last section.

\section{MOTIVATIONAL EXAMPLE, \\ SLIDING MOTIONS ON $S\times\RE$} \label{sec:motivation}

To illustrate the potential problems of linear sliding surfaces
on systems with non-Euclidean configuration spaces we elaborate on an example
given in~\cite{bhat2000}. Although the scope of~\cite{bhat2000} is restricted to
smooth control laws, the essence of the problems remains in the non-smooth setting.

Consider a rigid body with a single fixed axis of rotation. Its phase space $\MA$
consists of all possible rotations, $S$, together with all possible angular velocities,
$\RE$, so that $\MA$ is the cylinder $S\times\RE$. Suppose further that the system
is subject to a control torque, $u$. In local coordinates $(\theta, \omega$)
for $\MA$, the equations of motion are 
\begin{subequations} \label{eq:systemSR}
\begin{align}
 \dot{\theta} &= \omega \\
 \dot{\omega} &= u \;.
\end{align}
\end{subequations}

The objective is to regulate the angular position to the value $\theta=0$.
A typical linear sliding variable could be 
\begin{equation} \label{eq:standardSV}
 \sigma(\theta,\omega) = \omega + \theta \;.
\end{equation}
It is straightforward to verify that the control law
\begin{equation} \label{eq:uS1}
 u(\theta,\omega) = -\left( |\omega|+1 \right)\sign \sigma(\theta,\omega)
\end{equation}
drives the state to the sliding surface 
\begin{displaymath}
 \SL = \left\{ (\theta,\omega) \in \RE\times\RE \mid \sigma(\theta,\omega) = 0 \right\}
\end{displaymath}
(solutions are taken in the sense of Filippov~\cite{filippov}).
Also, it is easily seen that the reduced-order dynamics along the sliding surface
are given by $\dot{\theta} = -\theta$,
for which $\theta = 0$ is globally exponentially stable. It may thus appear
that the closed-loop system~\eqref{eq:systemSR}-\eqref{eq:uS1} has a globally exponential
stable equilibrium. However, note that a given point is not represented 
uniquely by $\theta$. Namely, given a principal angle $\bar{\theta} \in [0, 2\pi)$, all $\theta$ such
that $\theta \bmod 2\pi = \bar{\theta}$ represent the same point in $S$. Moreover, some of these
$\theta$ yield different control values. Thus, the control is, in reality, multi-valued, 
a fact that has been ignored in the previous analysis and which invalidates the hasty
conclusion about the global asymptotic stability of $(0,0)$.

A direct practical consequence of the non-unique representation is the 
\emph{unwinding} phenomenon. To appreciate it, consider an initial condition $(4\pi,0) \not\in \SL$.
Although this initial condition represents precisely the desired equilibrium
$(0,0) \in \SL$, the control law~\eqref{eq:uS1} will drive the system state
from $(4\pi,0)$ to $(0,0)$ by making two absolutely unnecessary rotations.

We can stop the unwinding phenomenon if we replace~\eqref{eq:standardSV} with
\begin{equation} \label{eq:linearSV}
 \sigma(\theta,\omega) = \omega - \pi + (\theta - \pi)\bmod2\pi \;,
\end{equation}
as this makes the control single-valued. Note that the sliding variable is still linear
for $\theta \in [0, 2\pi]\setminus\left\{ \pi \right\}$ but, because of the topology of the cylinder, it
must necessarily contain a discontinuity at some point in order to retain its linear character. In this case,
we have chosen to place such discontinuity at $\theta = \pi$.

Figure~\ref{fig:linear} shows the phase plane of~\eqref{eq:systemSR},~\eqref{eq:uS1},~\eqref{eq:linearSV}.
For this simple two-dimensional example, it suffices to look at the phase plane to see that the system enjoys
almost global asymptotic stability (see Definition~\ref{def:almost} on p.~\pageref{def:almost}), which is the
most we can expect from a mechanical system with a compact 
configuration space. However, note that $\SL$ is not connected, and that the vector field has discontinuities 
outside $\SL$, along the dashed line joining the points $(\pi,-\pi)$ and $(\pi,\pi)$. Also, note that there
is an unstable equilibrium at $(\pi,0)$ and that its presence rules out the global attractivity to $(0,0)$.
A rigorous analysis of the global stability properties of this system requires the use of mathematical tools
not commonly found in the sliding-mode literature. Indeed, with few exceptions such as~\cite{miranda},
$\sigma$ is always required to be continuous.

\begin{figure}
 \begin{subfigure}[t]{0.23\textwidth}
  \centering
  \includegraphics[width=\linewidth]{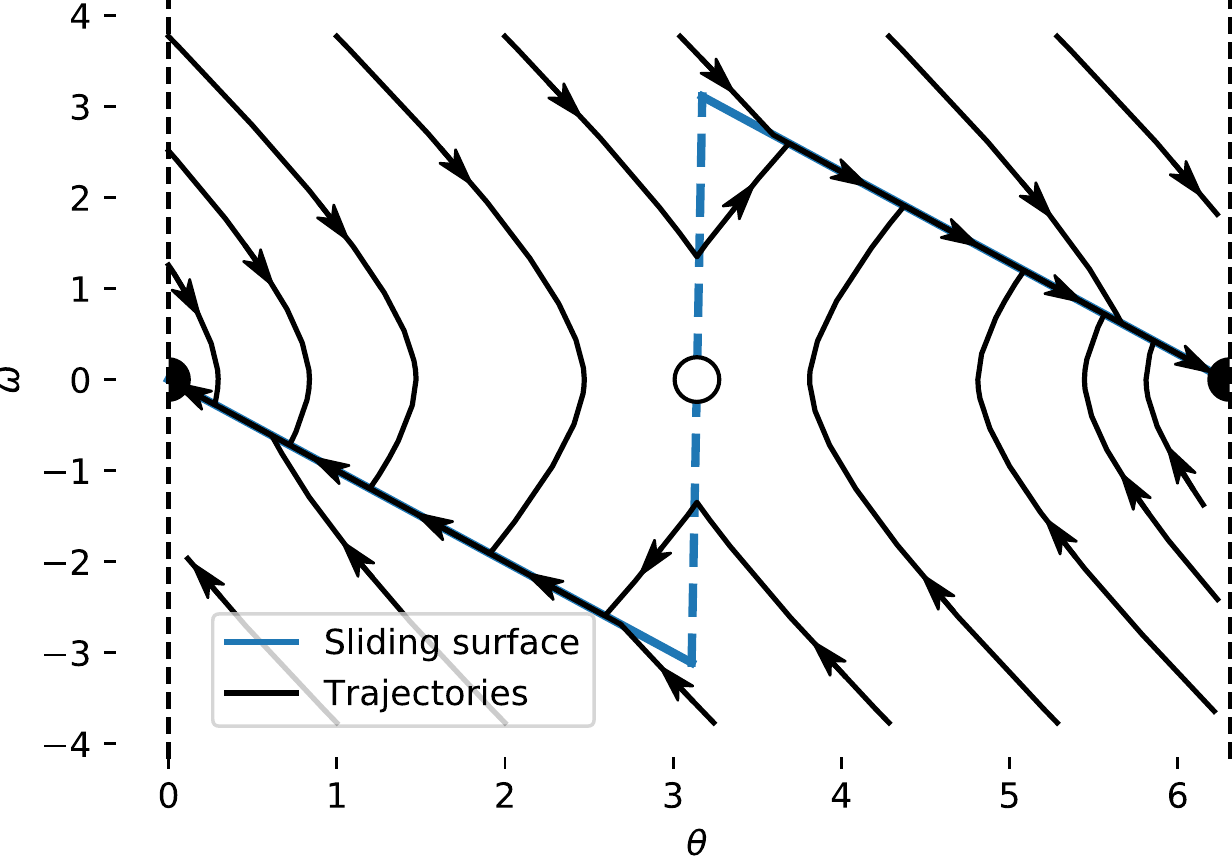}
  \caption{Sliding variable~\eqref{eq:linearSV}. The sliding surface
  is disconnected and the vector field is discontinuous at some 
  point outside it.}
  \label{fig:linear}
 \end{subfigure}
 \quad
 \begin{subfigure}[t]{0.23\textwidth}
  \centering
  \includegraphics[width=\linewidth]{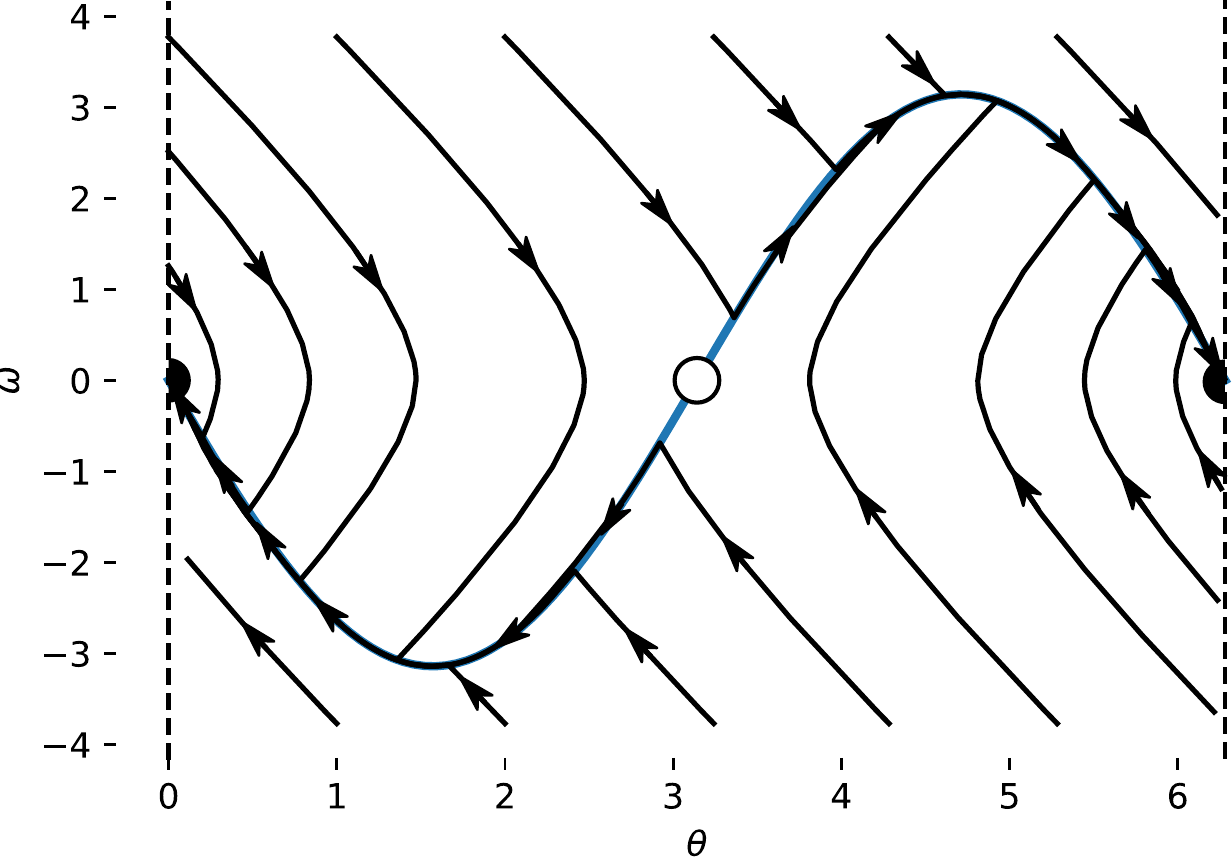}
  \caption{Sliding variable~\eqref{eq:contSV}. The sliding surface
  is connected and smooth.}
  \label{fig:continuous}
 \end{subfigure}
 \caption{Phase planes of~\eqref{eq:systemSR},~\eqref{eq:uS1}. We identify pairs of 
  points $(0,\omega)$ and $(2\pi,\omega)$, so that the phase planes lie on $S\times\RE$.
  There are a stable and an unstable equilibrium at $(0,0)$ and $(0,\pi)$, respectively.}
 \label{fig:S1}
\end{figure}

A reasonable alternative is to replace~\eqref{eq:standardSV} by
\begin{equation} \label{eq:contSV}
 \sigma(\theta,\omega) = \omega + \pi \sin\theta \;.
\end{equation}
The periodicity in $\theta$ ensures that~\eqref{eq:uS1} is single-valued without destroying 
the continuity of the sliding variable. The phase plane is shown in Figure~\ref{fig:continuous}.
Again, $(0,0)$ is almost globally asymptotically stable, but without the technical difficulties mentioned
above.
	
In Section~\ref{sec:main} we propose a smooth sliding variable for systems whose configuration
space is $SO(3)$. The sliding variable is such that, in the same spirit of~\eqref{eq:contSV},
yields a single-valued controller that exhibits no unwinding. The stability properties of the system
will be established rigorously (see Remark~\ref{rem:SR}).


\section{PRELIMINARIES, \\ SYSTEMS ON $SO(3)\times\RE^3$} \label{sec:prel}

In this section, we derive the error equations for the attitude of a rigid body. We also
recall some stability notions.

\subsection{$SO(3)$}

Since the set of possible attitudes of a rigid body is not a Euclidean space, several
parametrizations are used in the literature. Common parametrizations include Euler angles, quaternions, and 
rotational matrices. Parametrizations using Euler angles contain singular points, that is,
points at which the Jacobian of the coordinate chart loses rank and angular velocities
are no longer well defined. Also, Euler angles are non-unique, which leads to the unwinding
phenomenon described in the previous section. Parametrizations using quaternions are global,
by which we mean that there are no singular points. However, quaternion representations are
still non-unique (as every attitude is represented by two quaternions) and the unwinding problem
can still potentially manifest. 

The set of rotation matrices forms the Special Orthogonal group
\begin{displaymath}
 SO(3) = \left\{ R \in\mathbb{R}^{3\times 3} \mid R R^\top = I,\; \det(R) = 1 \right\}
\end{displaymath}
with $I$ the identity matrix of order 3. Rotational matrices parametrize the set of attitudes,
inasmuch as every attitude is associated with a single rotational matrix and \emph{vice versa}. 
Parametrizations using rotational matrices have the compelling advantage of being both global
and unique. The interested reader is referred to~\cite{chaturvedi2011}, where a more thorough
analysis of these different parametrizations can be found.
 
Besides being a smooth manifold, $SO(3)$ has the structure of a Lie group with the standard matrix
product. The tangent space at the identity of $SO(3)$, denoted $T_ISO(3)$, can be associated with 
the space of skew-symmetric matrices $X \in \RE^{3\times 3}$, $X + X^\top = 0$. Endowing this 
space with the bracket $[A,B]=AB-BA$ gives rise to the Lie algebra of $SO(3)$, denoted
by $\so(3)$ (see~\cite[Appx. A]{murray} for details and formal definitions). 

Let $\times$ be the standard cross product in $\RE^3$. We will make use of the isomorphism 
$\omega \mapsto \omega^\times$ between $(\RE^3,\times)$ and $(\so(3),[\cdot,\cdot])$, defined
as
\begin{displaymath}
 \omega^\times = 
  \begin{pmatrix}
           0 & -\omega_3 &  \omega_2 \\
    \omega_3 &         0 & -\omega_1 \\
   -\omega_2 &  \omega_1 & 0
  \end{pmatrix} \;.
\end{displaymath}
For every $(R,\omega)$, we have~\cite{mahony2008}
\begin{equation} \label{eq:relVel}
 (R \omega)^\times = R \omega^\times R^\top \;.
\end{equation}
We will also make use of the inverse map of $(\cdot)^\times$, denoted by $\vex : \so(3) \to \RE^3$.

The operator $\PA : \RE^{3\times 3} \to \so(3)$ retains the skew symmetric part of a matrix,
\begin{displaymath}
 \PA(A) = \frac{1}{2}\left( A - A^\top \right) \;,
\end{displaymath}
while $\PS(A) : \RE^{3\times 3} \to \RE^{3\times 3}$,
\begin{displaymath}
 \PS(A) = \frac{1}{2}\left( A + A^\top \right) \;,
\end{displaymath}
retains the symmetric part. 

The trace will serve as a matrix inner product on $\RE^{3\times 3}$,
\begin{displaymath}
 \langle A, B \rangle = \tr(A^\top B) \;.
\end{displaymath}
If $v,w \in \RE^3$ and $A = A^\top$, we know that~\cite{mahony2008}
\begin{equation} \label{eq:inner}
 \langle v, w \rangle = \frac{1}{2} \langle v^\times, w^\times \rangle 
\end{equation}
and
\begin{equation} \label{eq:symSkew}
 \langle A, v^\times \rangle = 0 \;.
\end{equation}

Recall that, by Euler's rotation theorem, any attitude $R$ can be attained by
a single rotation $\theta \in [0,2\pi)$ about some unit vector $\eta \in \RE^3$.
Rodrigues formula establishes that
\begin{equation} \label{eq:rodrigues}
 R = I + \eta^\times \sin(\theta) + (\eta^\times)^2(1-\cos(\theta)) \;.
\end{equation}
It follows from Rodrigues formula that~\cite[Ch. 2]{murray}
\begin{equation} \label{eq:trace}
 \tr(R) = 1 + 2\cos(\theta) \;.
\end{equation}
 
\subsection{Attitude Dynamics}

The state space $\MA$ of the attitude dynamics consists of the set of possible attitudes,
$SO(3)$, together with the set of possible angular velocities, $\RE^3$, so that 
$\MA = SO(3)\times\RE^3$.

Suppose we are able to provide torques along the principal axes of a rigid body 
with an inertia matrix $J \in \RE^{3\times 3}$, $J = J^\top > 0$.
Then, the rotational motion of the rigid body is determined by a kinematic equation
\begin{displaymath}
 \dot{R} = R\omega^\times 
\end{displaymath}
(for conciseness we omit all time arguments), and Euler's equation,
\begin{displaymath}
 J\dot{\omega} = (J\omega)\times \omega + u + d \;,
\end{displaymath}
where $u,d \in \RE^3$ are the control and the disturbance torques, respectively~\cite[Ch. 2]{murray}.

Let $R_\rd$ and $\omega_\rd$ be (possibly time varying) desired attitude and angular velocity
satisfying the kinematic equation
\begin{equation} \label{eq:desired}
 \dot{R}_\rd = R_\rd \omega_\rd^\times \;. 
\end{equation}
Now, define the attitude and angular velocity errors $R_\re = R_\rd^\top R$ and
$\omega_\re = \omega - R_\re^\top\omega_\rd$, respectively, and note that
\begin{displaymath}
 (R_\re,\omega_\re) = (I, 0) \quad \text{if, and only if,} \quad (R,\omega) = (R_\rd, \omega_\rd) \;.
\end{displaymath}
The error kinematics are then
\begin{align*}
 \dot{R}_\re &= (R_\rd \omega_\rd^\times)^\top R + R_\rd^\top R \omega^\times \\
             &= -(\omega_\rd)^\times R_\re + R_\re\omega^\times \\
             &= (R_\re(\omega_\re-\omega))^\times R_\re + R_\re\omega^\times \;.
\end{align*}
Using~\eqref{eq:relVel} we obtain
\begin{subequations} \label{eq:error}
\begin{equation} 
 \dot{R}_\re = R_\re \omega_\re^\times \;.
\end{equation}
The dynamic equation is
\begin{align*}
 J\dot{\omega}_\re &= J(\dot{\omega}-\dot{R}_\re^\top\omega_\rd - R_\re^\top\dot{\omega}_\rd) \\
                   &= (J\omega)\times\omega + J\omega_\re^\times R_\re^\top \omega_\rd - JR_\re^\top\dot{\omega}_\rd + u + d \\
                   &= (J\omega)\times\omega + JR_\re^\top\left((R_\re\omega_\re)^\times\omega_\rd - \dot{\omega}_\rd \right)
                    + u + d\;,
\end{align*}
where we have used~\eqref{eq:relVel} to obtain the last equality. Set
\begin{displaymath}
 u = -JR_\re^\top\left((R_\re\omega_\re)^\times\omega_\rd - \dot{\omega}_\rd \right) + v
\end{displaymath}
so that
\begin{equation}
 J\dot{\omega}_\re = (J\omega)\times\omega + v + d \;,
\end{equation}
\end{subequations}

\subsection{Stability}

It follows from the compacity of $SO(3)$ and the arguments presented in~\cite{bhat2000} that
global asymptotic stability of an equilibrium of~\eqref{eq:error} is impossible,
so we must settle for \emph{almost} global asymptotic stability, a property that we recall now.

\begin{definition} \label{def:almost}
A dynamical system is almost globally asymptotically stable if all trajectories starting in 
some open dense subset of the state space tend asymptotically to a specified stable equilibrium state.
\end{definition}

Almost global stability will be proved using a refinement of the LaSalle
invariance principle for systems defined on manifolds. 

\begin{theorem}[LaSalle principle~\cite{bullo}] \label{thm:laSalle}
For a smooth vector field $f$ on $\MA$, let $\mathcal{A} \subset \MA$ be compact and
positively invariant for $f$. Let a smooth function $V:\MA \to \mathbb{R}$
satisfy $\dot{V}(x)\leq 0$ for all $x \in \mathcal{A}$ and let $\mathcal{B}$ be the 
largest positively invariant set for $f$ contained in
$\{ x \in \mathcal{A} \mid \dot{V}(x) = 0 \}$. Then, the following
statements hold:
\begin{itemize}
 \item Each integral curve of $f$ with initial condition in $\mathcal{A}$ 
  approaches $\mathcal{B}$ as $t \to +\infty$.
 \item If $\mathcal{B}$ consists of a finite number of isolated points, then
  each integral curve of $f$ with initial condition in $\mathcal{A}$ converge to a point of
  $\mathcal{B}$ as $t \to -\infty$.
\end{itemize}
\end{theorem}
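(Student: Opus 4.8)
The plan is to prove this geometric LaSalle principle by a limit-set argument: I would obtain the forward assertion from the $\omega$-limit set of a trajectory, and the backward assertion from its $\alpha$-limit set, exploiting in the latter the extra hypothesis that $\mathcal{B}$ is discrete. The workhorse in both parts is the classical fact that an integral curve of a smooth vector field which remains in a compact set has an $\omega$-limit (resp. $\alpha$-limit) set that is nonempty, compact, connected, invariant, and contained in that set, and that the curve approaches its limit set. Monotonicity of $V$ along trajectories, together with continuity of $V$ on the compact set $\mathcal{A}$, will pin down the limit sets inside $\{\dot V = 0\}$.

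For the first bullet, fix $x_0 \in \mathcal{A}$ and let $x(\cdot)$ be the maximal integral curve through $x_0$. Since $\mathcal{A}$ is compact and positively invariant, $x(t)$ is defined and stays in $\mathcal{A}$ for all $t \ge 0$. Then $t \mapsto V(x(t))$ is non-increasing (as $\dot V \le 0$ on $\mathcal{A}$) and bounded below, hence converges to some $c \in \RE$. Let $\Omega^{+} \subseteq \mathcal{A}$ be the $\omega$-limit set of $x(\cdot)$; by continuity of $V$ and convergence of $V \circ x$ we get $V \equiv c$ on $\Omega^{+}$. Invariance of $\Omega^{+}$ means the trajectory through any of its points stays in $\Omega^{+}$, along which $V$ is constant, so differentiating gives $\dot V = 0$ at that point. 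Hence $\Omega^{+} \subseteq \{ x \in \mathcal{A} \mid \dot V(x) = 0 \}$ and, being invariant, $\Omega^{+} \subseteq \mathcal{B}$. Since $x(t) \to \Omega^{+}$ as $t \to +\infty$, the first claim follows.

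For the second bullet, suppose $\mathcal{B} = \{ p_1, \dots, p_N \}$ with the $p_i$ isolated; each $p_i$ is then an equilibrium, since the trajectory through it stays in the discrete positively invariant set $\mathcal{B}$ and is therefore constant. Consider an integral curve $x(\cdot)$ whose negative semi-orbit lies in $\mathcal{A}$ (implicit in the statement, since otherwise the limit as $t \to -\infty$ need not exist); it is then defined on all of $(-\infty, 0]$ with relatively compact image. Now $t \mapsto V(x(t))$ is non-decreasing as $t$ decreases and bounded above on $\mathcal{A}$, so it converges to some $c^{-} \in \RE$ as $t \to -\infty$. Letting $\Omega^{-} \subseteq \mathcal{A}$ be the $\alpha$-limit set, the argument above gives $V \equiv c^{-}$ on $\Omega^{-}$, hence $\dot V \equiv 0$ there, and $\Omega^{-}$ invariant, so $\Omega^{-} \subseteq \mathcal{B}$. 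But $\Omega^{-}$ is connected while $\mathcal{B}$ consists of finitely many isolated points, so $\Omega^{-}$ is a single $p_i$, and $x(t) \to p_i$ as $t \to -\infty$.

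The analytic steps (monotonicity and boundedness of $V$ along trajectories, completeness of the forward trajectory, standard properties of limit sets) are routine. I expect the two delicate points to be: first, the \emph{constant $V$ on the limit set $\Rightarrow$ $\dot V \equiv 0$ there} step, which is the heart of LaSalle and rests essentially on invariance of the limit set; and second, in the backward-time part, making precise that the negative semi-orbit stays in the compact set $\mathcal{A}$ (so that $\Omega^{-}$ is nonempty, compact, and connected), together with the elementary but crucial topological observation that a connected set contained in a finite union of isolated points must be a singleton — this is exactly what upgrades ``$\Omega^{-} \subseteq \mathcal{B}$'' to ``$x(t)$ converges to a point of $\mathcal{B}$''.
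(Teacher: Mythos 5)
The paper does not prove this theorem at all: it is quoted verbatim (as a known result) from the cited reference, so there is no in-paper proof to compare against. On its own merits, your argument for the first bullet is the standard and correct one: forward completeness and precompactness of the orbit in $\mathcal{A}$, monotone convergence of $V$ along the trajectory, constancy of $V$ on the $\omega$-limit set, and then invariance of the $\omega$-limit set to conclude $\dot V\equiv 0$ there and hence containment in $\mathcal{B}$. For the second bullet, be aware that the ``$t\to-\infty$'' in the statement is almost certainly a typo for ``$t\to+\infty$'' (the cited source states forward-time convergence to a single point of $\mathcal{B}$); read literally, the backward-time claim is not even well posed under the stated hypotheses, since only \emph{positive} invariance of $\mathcal{A}$ is assumed and the negative semi-orbit need not remain in $\mathcal{A}$ or be defined for all $t<0$ --- you correctly noticed this and patched it with an extra hypothesis, but the cleaner resolution is to prove the forward-time version: the $\omega$-limit set you already constructed is connected and contained in the finite set of isolated points $\mathcal{B}$, hence is a singleton, which immediately upgrades ``approaches $\mathcal{B}$'' to ``converges to a point of $\mathcal{B}$'' with no additional assumptions. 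Your key topological observation (a connected subset of finitely many isolated points is a singleton) is exactly the right ingredient; it just belongs in forward time.
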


%


\section{MAIN RESULTS, \\ SLIDING-MODE CONTROL ON $SO(3)\times\RE^3$} \label{sec:main}

Allow us first to endow $\MA = SO(3)\times \RE^3$ with the product
$\MA \times \MA \to \MA$ defined by
\begin{subequations} \label{eq:product}
\begin{equation}
 (R_1,\omega_1)\cdot(R_2,\omega_2) = (R_3,\omega_3) \;,
\end{equation}
where
\begin{align}
             R_3 &= R_1 R_2 \\
 \omega_3^\times &= \PA\left(R_1 \omega_2^\times + R_2^\top \omega_1^\times - \frac{1}{2}[R_1,R_2^\top] \right) \;.
\end{align}
\end{subequations}

\begin{lemma} \label{lem:group}
 $\MA$ is a Lie group with the product~\eqref{eq:product}.
\end{lemma}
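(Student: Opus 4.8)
The plan is to check directly that $\MA$ satisfies the group axioms under the product~\eqref{eq:product} and that the product and inversion maps are smooth; since $\MA=SO(3)\times\RE^3$ is already a smooth manifold as a product of smooth manifolds, this is all that is needed for it to be a Lie group. Closure and smoothness are the easy part: $R_3=R_1R_2\in SO(3)$ because $SO(3)$ is a group, while $\omega_3$ is well defined because $\PA$ always returns a skew-symmetric matrix, so that $\vex$ may be applied; moreover the maps $\omega\mapsto\omega^\times$, matrix multiplication, transposition, $\PA$ and $\vex$ are all polynomial (indeed several are linear) in the entries, so the product $\MA\times\MA\to\MA$ is smooth, and the same observation shows that the candidate inversion $(R,\omega)\mapsto(R^\top,-\omega)$ is smooth.

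Next I would exhibit the neutral element and the inverses. Taking $(R_2,\omega_2)=(I,0)$ in~\eqref{eq:product} gives $R_3=R_1$ and $\omega_3^\times=\PA(\omega_1^\times)=\omega_1^\times$, the bracket term vanishing because $[R_1,I]=0$ and $\PA$ being the identity on $\so(3)$; the computation with $(R_1,\omega_1)=(I,0)$ is symmetric, so $(I,0)$ is a two-sided identity. For inverses, substituting $(R_2,\omega_2)=(R_1^\top,-\omega_1)$ gives $R_3=R_1R_1^\top=I$ and $\omega_3^\times=\PA\big(-R_1\omega_1^\times+R_1\omega_1^\times-\tfrac12[R_1,R_1]\big)=\PA(0)=0$, and the mirror computation yields $(R_1^\top,-\omega_1)\cdot(R_1,\omega_1)=(I,0)$; hence $(R,\omega)^{-1}=(R^\top,-\omega)$.

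The substantive step, and the one I expect to be the main obstacle, is associativity of~\eqref{eq:product}. The $SO(3)$-component is trivially associative, so it remains to show that the $\omega$-components of $\big((R_1,\omega_1)\cdot(R_2,\omega_2)\big)\cdot(R_3,\omega_3)$ and of $(R_1,\omega_1)\cdot\big((R_2,\omega_2)\cdot(R_3,\omega_3)\big)$ coincide. Each side is $\PA$ applied to an expression that itself contains a nested $\PA$ coming from the intermediate product. The key simplification I would use to remove the nesting is the identity $\PA(A\,\PA(B))=\tfrac12\PA(AB+BA^\top)$, together with $\PA(\PA(B)\,A)=\tfrac12\PA(BA+A^\top B)$, valid for arbitrary matrices $A,B$ (both follow at once from linearity of $\PA$ and from $\PA(AB^\top)=-\PA(BA^\top)$). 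After applying these to the intermediate $\omega$-components, one is left with a longer but entirely mechanical comparison of linear combinations of terms of the form $R_iR_j\omega_k^\times$, $\omega_k^\times R_iR_j$ and $R_iR_jR_k^\top$; the commutator terms are matched by expanding $[R_iR_j,R_k^\top]$ and $[R_i,R_jR_k^\top]$ with the bilinearity of $[\cdot,\cdot]$, and~\eqref{eq:relVel} is convenient for recognizing recurring patterns. I expect the two sides to agree once the nested projections are unfolded, the only real difficulty being careful bookkeeping. With associativity established, $\MA$ is a group, and together with the smoothness noted above it is a Lie group, which proves the lemma.
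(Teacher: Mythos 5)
Your routine checks are fine and go slightly beyond the paper's own proof, which verifies the identity and the inverse only on one side and never mentions associativity. You are also right to single out associativity as the substantive step. The gap is that you do not actually carry that step out: you reduce it to ``careful bookkeeping'' and conclude with the statement that you \emph{expect} the two sides to agree. Since associativity is precisely the content that separates this lemma from a triviality, an unverified expectation at that point leaves the proof incomplete.

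Worse, the bookkeeping does not close. Test associativity on the triple $(R_1,0)$, $(R_2,0)$, $(I,w)$. Grouping to the left, the $\omega$-component of the product is $\vex\bigl(\PA(R_1R_2w^\times)-\tfrac12\PA([R_1,R_2^\top])\bigr)$; grouping to the right it is $\vex\bigl(\PA\bigl(R_1\PA(R_2w^\times)\bigr)-\tfrac12\PA([R_1,R_2^\top])\bigr)$. By linearity of $\PA$ and the splitting $R_2w^\times=\PA(R_2w^\times)+\PS(R_2w^\times)$, the two arguments differ by
\begin{displaymath}
 \PA(R_1R_2w^\times)-\PA\bigl(R_1\PA(R_2w^\times)\bigr)=\PA\bigl(R_1\,\PS(R_2w^\times)\bigr) ,
\end{displaymath}
which is not identically zero. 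Concretely, let $e_1,e_2,e_3$ be the standard basis of $\RE^3$, take $w=e_1$ and let $R_2$ be the rotation by $\pi/2$ about $e_1$; then $R_2w^\times=(e_1^\times)^2=\diag(0,-1,-1)$ is symmetric, so $\PS(R_2w^\times)=\diag(0,-1,-1)$, and for $R_1$ the rotation by $\pi/2$ about $e_3$ one gets $\PA\bigl(R_1\diag(0,-1,-1)\bigr)=\tfrac12\bigl(e_1e_2^\top-e_2e_1^\top\bigr)\neq 0$. Hence the product~\eqref{eq:product} as written is not associative, and no unfolding of the nested projections will make the two sides agree; your identity $\PA(A\PA(B))=\tfrac12\PA(AB+BA^\top)$ is correct but leads straight to this obstruction rather than around it. So the missing step is not mere bookkeeping: it is the point where the argument fails, and the lemma cannot be established for the product in its present form (the paper's proof avoids confronting this only by omitting the associativity axiom altogether).
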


\begin{proof}
 $\MA$ is a smooth manifold and~\eqref{eq:product} is clearly smooth,
 so we will only verify the group axioms.
 
 The point $(I,0)$ is the identity:
 \begin{displaymath}
  (I,0)\cdot(R,\omega) = (R,\vex(\PA(\omega^\times I+0))) = (R,\omega) \;. 
 \end{displaymath}
 
 The multiplicative inverse of $(R,\omega)$ is $(R^\top,-\omega)$:
 \begin{multline*}
  (R^\top,-\omega)\cdot(R,\omega) = \left(I, \vex(\PA(R^\top\omega^\times - R^\top\omega^\times + 0))\right) \\
    = (I,0) \;.
 \end{multline*} 
\end{proof}

\subsection{The Proposed Sliding Subgroup}

The sliding surface is introduced in the following proposition.
\begin{proposition}
 Consider the sliding variable $\sigma : \MA \to \RE^3$ defined by
 \begin{equation} \label{eq:sigma}
  \sigma(R_\re,\omega_\re) = \omega_\re + \vex\left( \PA(R_\re) \right) \;.
 \end{equation}
 The sliding surface
 \begin{equation} \label{eq:D}
  \SL = \left\{ (R_\re,\omega_\re) \in \MA \mid \sigma(R_\re,\omega_\re) = 0 \right\}
 \end{equation}
 is a Lie subgroup of $\MA$.
\end{proposition}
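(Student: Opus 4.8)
The plan is to show that $\SL$ is a subgroup (closed under the product~\eqref{eq:product} and under inversion) and that it is an embedded submanifold; by the closed-subgroup theorem, or simply by direct inspection, this yields the Lie subgroup structure. First I would record the defining relation: $(R_\re,\omega_\re)\in\SL$ exactly when $\omega_\re = -\vex(\PA(R_\re))$, so $\SL$ is the graph of the smooth map $R_\re \mapsto -\vex(\PA(R_\re))$ over $SO(3)$. This immediately gives that $\SL$ is a smooth embedded submanifold diffeomorphic to $SO(3)$, hence compact and three-dimensional, and it contains the identity $(I,0)$ since $\PA(I)=0$.

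Next I would verify closure under inversion. Given $(R,\omega)\in\SL$, its inverse in $\MA$ is $(R^\top,-\omega)$ by Lemma~\ref{lem:group}. I need $-\omega = -\vex(\PA(R^\top))$, i.e.\ $\omega = \vex(\PA(R^\top))$. But $\PA(R^\top) = \tfrac12(R^\top - R) = -\PA(R)$, so $\vex(\PA(R^\top)) = -\vex(\PA(R))$, which equals $\omega$ precisely because $(R,\omega)\in\SL$. So inversion closes.

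The main work is closure under the product. Take $(R_1,\omega_1),(R_2,\omega_2)\in\SL$, so $\omega_i^\times = -\PA(R_i)$ for $i=1,2$. Writing $(R_3,\omega_3)$ for the product, I must show $\omega_3^\times = -\PA(R_3) = -\PA(R_1R_2)$. Substituting $\omega_1^\times = -\PA(R_1)$ and $\omega_2^\times = -\PA(R_2)$ into the formula~\eqref{eq:product} for $\omega_3^\times$ gives
\begin{displaymath}
 \omega_3^\times = \PA\!\left(-R_1\PA(R_2) - R_2^\top\PA(R_1) - \tfrac12[R_1,R_2^\top]\right),
\end{displaymath}
and the goal is to simplify the right-hand side to $-\tfrac12\PA(R_1R_2)\cdot 2 = -\PA(R_1R_2)$. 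I would expand $\PA(R_i)=\tfrac12(R_i-R_i^\top)$, distribute, and use that $\PA$ is linear and annihilates symmetric matrices together with $\PA(A^\top)=-\PA(A)$; the commutator term should combine with the cross terms so that everything collapses to $\tfrac12\PA(R_1R_2 - (R_1R_2)^\top)$. This algebraic identity — showing the three grouped terms reduce exactly to $-\PA(R_1R_2)$ — is the crux and the one place a careful (if routine) matrix computation is unavoidable; the $\tfrac12[R_1,R_2^\top]$ correction in the product definition is presumably there precisely to make this work.

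Finally, with $\SL$ shown to be an embedded submanifold that is closed under the (smooth) product and inversion, it is a Lie subgroup of $\MA$, completing the proof. I would close by noting the payoff for later sections: since $\SL\cong SO(3)$ as a manifold, it carries the same topology as the attitude space, which is what rules out a globally stabilizing continuous sliding dynamics and forces the ``almost global'' qualifier.
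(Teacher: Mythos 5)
Your proposal follows essentially the same route as the paper: check that $(I,0)\in\SL$, check closure under the product~\eqref{eq:product} by substituting the constraint $\omega_i^\times=-\PA(R_i)$, and check closure under inversion. Your observation that $\SL$ is the graph of the smooth map $R\mapsto-\vex(\PA(R))$ over $SO(3)$ is actually a cleaner justification of the manifold structure than the paper's one-line smoothness remark, and your inversion step is identical to the paper's. The one point to tighten is the product step, which you correctly identify as the crux but leave as an unverified ``should collapse'': the computation does go through, since
$-R_1\PA(R_2)-R_2^\top\PA(R_1)-\tfrac12[R_1,R_2^\top] = -\tfrac12 R_1R_2+\tfrac12(R_1R_2)^\top = -\PA(R_1R_2)$,
whence $\omega_3^\times=\PA\left(-\PA(R_1R_2)\right)=-\PA(R_1R_2)=-\PA(R_3)$ as required (equivalently, as the paper argues, $R_1R_2-\tfrac12[R_1,R_2^\top]-R_1\PA(R_2)-R_2^\top\PA(R_1)$ is the symmetric matrix $\tfrac12\left(R_1R_2+(R_1R_2)^\top\right)$, so applying $\PA$ annihilates it). Note, however, that the intermediate target you announce, $\tfrac12\PA\left(R_1R_2-(R_1R_2)^\top\right)$, equals $+\PA(R_1R_2)$ and therefore has the wrong sign relative to the $-\PA(R_1R_2)$ you need; this is only a slip in a sketched sentence, but since the entire proposition hinges on this single algebraic identity, it must be carried out explicitly and with the signs right before the proof can be considered complete.
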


\begin{proof}
 The smoothness of $\SL$ is a simple consequence of the smoothness of $\vex$ and $\PA$.
 We now verify the group axioms.

 The identity is in $\SL$: $\sigma(I,0) = 0 + \vex\left( \PA(I) \right) = 0$.
 
 $\SL$ is closed under multiplication: The product~\eqref{eq:product} gives 
 \begin{multline*}
  \sigma\left((R_1,\omega_1)\cdot(R_2,\omega_2)\right)^\times = \\
   \PA\left(R_1 \omega_2^\times + R_2^\top \omega_1^\times - \frac{1}{2}[R_1,R_2^\top] \right) + \PA(R_1R_2) \;.
 \end{multline*}
 Now, let $(R_1,\omega_1)$ and $(R_2,\omega_2)$ be both in $\SL$. We have
 \begin{multline*}
  \sigma\left((R_1,\omega_1)\cdot(R_2,\omega_2)\right)^\times = \\
   \PA\left( R_1R_2 - \frac{1}{2}[R_1,R_2^\top] - R_1\PA(R_2) - R_2^\top \PA(R_1)\right) \;.
 \end{multline*}  
 Since
 \begin{multline*}
   R_1R_2 - \frac{1}{2}[R_1,R_2^\top] - R_1\PA(R_2) - R_2^\top \PA(R_1) =  \\
    R_1R_2 + R_2^\top R_1^\top
 \end{multline*}
 and the right-hand side is symmetric, we have 
 \begin{displaymath}
  \sigma\left((R_1,\omega_1)\cdot(R_2,\omega_2)\right) = 0 \;,
 \end{displaymath}
 which shows that $(R_1,\omega_1)\cdot(R_2,\omega_2)$ is also in $\SL$.

 $\SL$ is closed under inversion: Suppose that $(R,\omega) \in \SL$. We will verify that 
 $(R^\top,-\omega) \in \SL$. Direct computation gives
 \begin{displaymath}
  \sigma(R^\top,-\omega)^\times = -\omega^\times + \PA(R^\top) = -\omega^\times - \PA(R) = 0 \;.
 \end{displaymath} 
\end{proof}

Note that the sliding surface inherits both the topological and algebraic properties of
the phase space. This stands in contrast with the standard linear approach.

\subsection{Stability of the Reduced-Order System}

The dynamics along $\SL$ are obtained by simply enforcing the constraint
$\sigma(R_\re,\omega_\re) = 0$ on~\eqref{eq:error}. These are
\begin{equation} \label{eq:reduced}
 \dot{R}_\re = -R_\re\PA(R_\re) \;.
\end{equation}

\begin{theorem} \label{thm:almostStab}
 The identity $R_\re = I$ is an almost globally stable equilibrium of the 
 reduced-order dynamics~\eqref{eq:reduced}.
\end{theorem}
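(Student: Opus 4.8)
The plan is to use the modified trace function $V(R_\re) = \tr(I - R_\re)$ as a Lyapunov candidate and then invoke the LaSalle principle of Theorem~\ref{thm:laSalle}. First I would record the properties of $V$ that follow from~\eqref{eq:trace}: writing $\tr(R_\re) = 1 + 2\cos\theta$ gives $V(R_\re) = 2(1-\cos\theta)$, so $V \ge 0$ with $V(R_\re) = 0$ iff $R_\re = I$, and $V(R_\re) \le 4$ with $V(R_\re) = 4$ iff $\theta = \pi$, i.e. iff $R_\re$ lies in the set $\mathcal{E}\subset SO(3)$ of rotations by $\pi$. Since $V$ is smooth and positive definite about $I$, stability of $R_\re = I$ will follow from the standard Lyapunov argument once $\dot V \le 0$ is verified.

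Next I would differentiate $V$ along~\eqref{eq:reduced}. Using $\dot R_\re = -R_\re\PA(R_\re)$ and $R_\re\PA(R_\re) = \tfrac12(R_\re^2 - I)$ one gets $\dot V = \tr\bigl(R_\re\PA(R_\re)\bigr) = \tfrac12\bigl(\tr(R_\re^2) - 3\bigr)$; since $R_\re^2$ is a rotation by $2\theta$, \eqref{eq:trace} yields $\dot V = \cos 2\theta - 1 = -2\sin^2\theta \le 0$. Equivalently, expanding the skew part of $R_\re$ via Rodrigues' formula~\eqref{eq:rodrigues} and using~\eqref{eq:inner}, $\dot V = -\langle \PA(R_\re),\PA(R_\re)\rangle$, which shows directly that $\dot V = 0$ exactly when $\PA(R_\re) = 0$, that is, when $R_\re = R_\re^\top$; the symmetric elements of $SO(3)$ are precisely $\{I\}\cup\mathcal{E}$.

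I would then apply Theorem~\ref{thm:laSalle} to~\eqref{eq:reduced} on $SO(3)$, taking $\mathcal{A} = SO(3)$, which is compact and invariant. The key observation is that every element of $\mathcal{E}$ is an equilibrium of~\eqref{eq:reduced}: $R_\re = R_\re^\top$ gives $\PA(R_\re) = 0$, hence $\dot R_\re = 0$. Thus the largest invariant set inside $\{\dot V = 0\}$ is $\mathcal{B} = \{I\}\cup\mathcal{E}$, and every trajectory approaches $\mathcal{B}$. To turn this into convergence to $I$ from an open dense set, I would use monotonicity of $V$: along any trajectory $V(R_\re(t))$ decreases to some limit $V_\infty$, and $V$ must equal $V_\infty$ on the nonempty $\omega$-limit set, which is contained in $\mathcal{B}$; since $V$ takes only the values $0$ (on $\{I\}$) and $4$ (on $\mathcal{E}$) over $\mathcal{B}$, either $V_\infty = 0$, whence the $\omega$-limit set is $\{I\}$ and $R_\re(t)\to I$, or $V_\infty = 4$, which forces $V(R_\re(0)) \ge 4$ and therefore $R_\re(0)\in\mathcal{E}$ because $V \le 4$ everywhere. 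Hence every trajectory with $R_\re(0)\notin\mathcal{E}$ converges to $I$, and since $\mathcal{E}$ is a closed embedded submanifold of $SO(3)$ of dimension $2 < 3$, it is nowhere dense, so $SO(3)\setminus\mathcal{E}$ is open and dense; combined with the stability of $I$, this is almost global asymptotic stability in the sense of Definition~\ref{def:almost}.

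I expect the only real subtlety to be the step from ``trajectories approach the invariant set $\mathcal{B}$'' to ``trajectories converge to the point $I$,'' together with the correct identification and discarding of the exceptional set $\mathcal{E}$; the monotonicity argument above is what makes this clean, since $\mathcal{E}$ is simultaneously the set of global maxima of $V$ and a set of equilibria, so no nontrivial trajectory can limit onto it. A minor point worth stating explicitly is that $R_\re \mapsto -R_\re\PA(R_\re)$ is polynomial and satisfies $R_\re^\top\dot R_\re = -\PA(R_\re)\in\so(3)$, hence is a bona fide smooth vector field on $SO(3)$, as Theorem~\ref{thm:laSalle} requires.
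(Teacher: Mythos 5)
Your proposal is correct and follows essentially the same route as the paper: the trace Lyapunov function $V(R_\re)=\tr(I-R_\re)$ (the paper uses $\tfrac12\tr(I-R_\re)$), the computation $\dot V = -\langle \PA(R_\re),\PA(R_\re)\rangle \le 0$, LaSalle on the compact invariant set $SO(3)$, and the identification of the residual set as $\{I\}$ together with the rotations by $\pi$. Your closing monotonicity argument --- no trajectory starting off the set of $\pi$-rotations can limit onto it because $V$ is nonincreasing and attains its global maximum exactly there, and that set is closed and nowhere dense so its complement is open and dense --- is a slightly more careful justification of the paper's final step, which appeals only to instability of those equilibria and to $\mathcal{E}$ having measure zero.
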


\begin{proof}
 We borrow the linear Lyapunov-function candidate~\cite{mahony2008}
 \begin{displaymath}
  V_R(R_\re) = \frac{1}{2}\tr\left( I - R_\re \right) \;.
 \end{displaymath}
 Note that $V_R(R_\re) \ge 0$ by the facts that $\tr(I) = 3$ and that $\tr(R_\re) \in [-1,3]$
 (cf.~\eqref{eq:trace}),  and that $V_R(R_\re) = 0$ if, and only if, $R_\re = I$. Its time
 derivative is
 \begin{align*}
  \dot{V}_R(R_\re) &= \frac{1}{2} \tr\left( R_\re\PA(R_\re) \right) \\
                   &= \frac{1}{2} \langle R_\re^\top, \PA(R_\re) \rangle \\
                   &= \frac{1}{2} \langle -\PA(R_\re)+\PS(R_\re), \PA(R_\re) \rangle \\
                   &= -\frac{1}{2} \langle \PA(R_\re), \PA(R_\re) \rangle \;,
 \end{align*}
 where we have used~\eqref{eq:symSkew} to derive the last equation. Using~\eqref{eq:inner}
 we obtain
 \begin{align*}
  \dot{V}_R(R_\re) &= -\langle \vex(\PA(R_\re)), \vex(\PA(R_\re)) \rangle \\
                   &= -\|\vex(\PA(R_\re))\|^2 \le 0 \;.
 \end{align*}
 This proves the stability of $R_\re = I$. 

 Almost global convergence will be established using Theorem~\ref{thm:laSalle}.
 Allow us to first compute the set 
 \begin{displaymath}
  \mathcal{E} = \left\{ R_\re \in SO(3) \mid \dot{V}_1(R_\re) = 0 \right\} \;.
 \end{displaymath}
 It follows from~\eqref{eq:rodrigues} that
 \begin{displaymath}
  \PA(R_\re) = \sin(\theta)\eta_\re^\times \;.
 \end{displaymath}
 Since $\eta_\re \neq 0$, we have $\PA(R_\re) = 0$ only if $\theta \in \left\{ 0, \pi \right\}$.
 From~\eqref{eq:rodrigues} we reach the conclusion
 \begin{displaymath}
  \mathcal{E} = \left\{ I \right\}\cup\left\{ -I + 2\eta\eta^\top \mid \eta \in \RE^3,\;\|\eta\| = 1 \right\} \;.
 \end{displaymath}
 Note that $\mathcal{E}$ is comprised of equilibria, so $\mathcal{E}$ is in fact the largest invariant set
 contained in $\mathcal{E}$, so all trajectories converge to $\mathcal{E}$.

 Finally, note that for 
 \begin{displaymath}
  R_\re \in \left\{ -I + 2\eta\eta^\top \mid \eta \in \RE^3,\; \|\eta\| = 1 \right\}
 \end{displaymath}
 we have $V_R(R_\re) = \tr\left(I-\eta\eta^\top \right) = 3 - \tr(\eta^\top\eta) = 2$,
 which is the maximum value that $V_R$ can attain. This implies that all the equilibria
 $R_\re \in \mathcal{E}$ are unstable except for $R_\re = I$. Thus, the only attractor is 
 the point $R_\re = I$. The fact that $\mathcal{E}$ is of measure zero allows us to
 conclude almost global convergence.
\end{proof}

\begin{remark} \label{rem:SR}
 The problem of Section~\ref{sec:motivation} and the proposed sliding surface
 can be recovered from the $SO(3)\times\RE^3$ setting by restricting the motion about a single axis.
\end{remark}

\subsection{The Reaching Law}

The condition $\sigma(R_\re,\omega_\re) = 0$ can be enforced with the following control law.

\begin{theorem} \label{thm:reaching}
 Suppose that the perturbations $d$ are bounded by a known constant $\bar{d} > 0$, that is,
 $\|d\| \le \bar{d}$. Take a constant $\delta > 0$ and set
 \begin{equation} \label{eq:uSO3}
  v(R_\re,\omega_\re,\omega) = -K(\omega_\re,\omega) \frac{\sigma(R_\re,\omega_\re)}{\|\sigma(R_\re,\omega_\re)\|}
 \end{equation}
 with $\sigma$ as in~\eqref{eq:sigma} and with the gain
 \begin{equation} \label{eq:gain}
  K(\omega_\re,\omega) \ge \|J\|_2 \cdot\|\omega\|^2 + \|\omega_e\| + \bar{d} + \delta \;.
 \end{equation}
 Then, the trajectories of~\eqref{eq:error} converge to $\SL$~\eqref{eq:D} in finite time.
\end{theorem}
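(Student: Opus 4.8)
The plan is a standard sliding-mode reaching argument conducted in the inertia metric. I would take the Lyapunov candidate $W(\sigma)=\tfrac{1}{2}\sigma^\top J\sigma$, which is positive definite and proper in $\sigma$ because $J=J^\top>0$, with $\sigma=\sigma(R_\re,\omega_\re)$ as in \eqref{eq:sigma}. The aim is to show that, along \eqref{eq:error} closed by \eqref{eq:uSO3}, $W$ decreases at a rate bounded away from zero whenever $\sigma\neq 0$, so that $\sigma$ — and hence the distance to $\SL$ — vanishes in finite time.

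First I would differentiate $\sigma$. Its $\omega_\re$ part contributes $\dot\omega_\re$, which from the dynamic half of \eqref{eq:error} obeys $J\dot\omega_\re=(J\omega)\times\omega+v+d$; its $\vex(\PA(R_\re))$ part contributes $\tfrac{\rd}{\rd t}\vex(\PA(R_\re))=\vex(\PA(\dot R_\re))=\vex(\PA(R_\re\omega_\re^\times))$, by linearity of $\PA$ and $\vex$ together with the kinematic half of \eqref{eq:error}. Using Rodrigues' formula \eqref{eq:rodrigues} one may rewrite the latter as $\tfrac12\left(\tr(R_\re)I-R_\re^\top\right)\omega_\re$, which is convenient because $\tr(R_\re)I-R_\re^\top$ is normal with spectral radius at most $2$ (cf.~\eqref{eq:trace}). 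Pairing $\dot\sigma$ with $J\sigma$ and inserting \eqref{eq:uSO3} gives
\[
 \dot W=\sigma^\top\left((J\omega)\times\omega\right)+\sigma^\top d+\tfrac12\sigma^\top J\left(\tr(R_\re)I-R_\re^\top\right)\omega_\re-K(\omega_\re,\omega)\|\sigma\| \;.
\]

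Next I would bound the three ``disturbance-like'' terms with Cauchy--Schwarz: $|\sigma^\top\left((J\omega)\times\omega\right)|\le\|\sigma\|\,\|J\omega\|\,\|\omega\|\le\|\sigma\|\,\|J\|_2\,\|\omega\|^2$, $|\sigma^\top d|\le\|\sigma\|\,\bar d$, and the kinematic term by $\|\sigma\|$ times a quantity controlled by $\|\omega_\re\|$ (and $\|J\|_2$) via the spectral bound above — equivalently via orthogonality of $R_\re$ and nonexpansiveness of $\PA$, which give $\|\vex(\PA(R_\re\omega_\re^\times))\|\le\|\omega_\re\|$. Invoking the gain inequality \eqref{eq:gain} then yields $\dot W\le-\delta\|\sigma\|$ for all $\sigma\neq 0$. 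Writing $\|\sigma\|_J:=\sqrt{\sigma^\top J\sigma}=\sqrt{2W}$ and using $\|\sigma\|\ge\|\sigma\|_J/\sqrt{\|J\|_2}$, this becomes $\tfrac{\rd}{\rd t}\sqrt W=\dot W/(2\sqrt W)\le-\delta/\sqrt{2\|J\|_2}$ whenever $\sigma\neq0$, so $\sqrt W$ reaches $0$ — i.e.\ $\sigma=0$ and the state lies in $\SL$ — no later than $t^\star=\sqrt{2\|J\|_2\,W(\sigma(0))}/\delta$, a bound independent of $d$.

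Two points must be addressed to make this rigorous. First, $v$ is discontinuous only on $\SL$ itself; off $\SL$ the unit vector $\sigma/\|\sigma\|$ is smooth, so Filippov solutions coincide with classical ones there and the comparison above is legitimate up to $t^\star$, after which invariance of $\SL$ (consistent with the reduced dynamics \eqref{eq:reduced}) keeps the trajectory on it. Second, one should check that solutions exist on $[0,t^\star]$: since $\|\sigma\|_J$ is nonincreasing and $\|\vex(\PA(R_\re))\|=|\sin\theta|\le1$, the error state $\omega_\re$ — and hence $\omega$ for bounded $\omega_\rd$ — stays bounded, ruling out finite escape. I expect the chief obstacle to be the bookkeeping in the bound on the kinematic term $\tfrac12\sigma^\top J\left(\tr(R_\re)I-R_\re^\top\right)\omega_\re$ and in reconciling all three constants with the precise form of the gain \eqref{eq:gain}; this is also the reason for weighting $W$ by $J$ rather than taking $\tfrac12\|\sigma\|^2$.
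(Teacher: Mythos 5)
Your proposal follows essentially the same route as the paper's proof: the $J$-weighted candidate $V_\sigma=\tfrac12\sigma^\top J\sigma$, the split of $\dot\sigma$ into the Euler-equation part $J^{-1}((J\omega)\times\omega+v+d)$ and the kinematic part $\vex(\PA(R_\re\omega_\re^\times))$, Cauchy--Schwarz bounds absorbed by the gain~\eqref{eq:gain}, and a comparison-lemma estimate $\dot V_\sigma\le-\delta\sqrt{V_\sigma/\lambda_{\max}(J)}$ giving finite-time convergence; your extra remarks on Filippov solutions off $\SL$ and on the absence of finite escape are not in the paper but are sound additions. The one ``bookkeeping obstacle'' you flag is, however, a genuine discrepancy rather than mere bookkeeping: with the $J$-weighted candidate the kinematic term is $\sigma^\top J\,\vex(\PA(R_\re\omega_\re^\times))$, which Cauchy--Schwarz bounds by $\|J\|_2\,\|\sigma\|\,\|\omega_\re\|$, whereas the gain~\eqref{eq:gain} budgets only $\|\omega_\re\|$ for it. The paper's own proof silently drops this $J$ when passing from $\sigma^\top J(\dot\omega_\re+\vex(\PA(\dot R_\re)))$ to $\sigma^\top((J\omega)\times\omega+\vex(\PA(R_\re\omega_\re^\times))+d+v)$, and moreover asserts $\|\vex(\PA(R_\re\omega_\re^\times))\|=\|\omega_\re\|$ where only ``$\le$'' holds (your spectral-radius argument via $\tfrac12(\tr(R_\re)I-R_\re^\top)$ gives the correct inequality). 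To close your argument as written you should either replace the term $\|\omega_\re\|$ in~\eqref{eq:gain} by $\|J\|_2\|\omega_\re\|$ (or $\lambda_{\max}(J)\|\omega_\re\|$), or switch to the unweighted candidate $\tfrac12\|\sigma\|^2$ and re-bound the Euler-equation terms through $J^{-1}$; either repair is routine and does not change the conclusion of the theorem.
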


\begin{proof}
 Consider the Lyapunov candidate function
 \begin{displaymath}
  V_\sigma(\sigma) = \frac{1}{2} \sigma^\top J \sigma \;.
 \end{displaymath}
 Its time derivative is, according to~\eqref{eq:sigma},
 \begin{displaymath}
  \dot{V}_\sigma(\sigma) = \sigma^\top J\left( \dot{\omega}_\re + \vex(\PA(\dot{R}_\re)) \right) \;.	
 \end{displaymath}
 The dynamics~\eqref{eq:error} give
 \begin{align*}
  \dot{V}_\sigma(\sigma) &= \sigma^\top J\left( \dot{\omega}_\re + \vex(\PA(\dot{R}_\re)) \right) \\
                         &= \sigma^\top \left( (J\omega)\times\omega + \vex(\PA(R_\re \omega_e^\times)) + d + v \right) \;.
 \end{align*}
 It is not difficult to verify that
 \begin{displaymath}
  \left\|(J\omega)\times\omega\right\| \le \|J\|_2\cdot\|\omega\|^2 
 \end{displaymath}
 and that~\eqref{eq:inner} implies that
 \begin{displaymath}
  \| \vex(\PA(R_\re \omega_e^\times)) \| = \|\omega_e\| \;.
 \end{displaymath}
 Thus, the time derivative is bounded as
 \begin{displaymath}
  \dot{V}_\sigma(\sigma) \le -\|\sigma\|\left(K(\omega_\re,\omega) - \|J\|_2\cdot\|\omega\|^2 - \|\omega_e\| - \bar{d}  \right) \;.
 \end{displaymath}
 Condition~\eqref{eq:gain} ensures that 
 \begin{displaymath}
  \dot{V}_\sigma(\sigma) \le -\delta \sqrt{ \frac{V_\sigma(\sigma)}{\lambda_{\max}(J)}} \;.
 \end{displaymath}
 The Comparison Lemma implies that $V_\sigma$, and hence $\sigma$, go to zero in 
 finite time.
\end{proof}

\section{SIMULATIONS} \label{sec:simulation}

In this section we show the unwinding phenomenon for a controller parametrized with quaternions.
We also illustrate the performance of the proposed controller using simulations.

\subsection{The Unwinding Phenomenon in Quaternion Representations}

We compare a sliding-mode controller based on quaternions with our controller to illustrate the potential unwinding problems
of the quaternion approach.

Popular attitude representations are based on quaternions. Even though global representations are obtained  
(there are no singularities), the non-uniqueness of quaternion representations lead to undesired unwinding.
In the following example we simulate the behavior of a rigid body with a sliding-mode controller 
based on quaternions~\cite{lo1995} and with the controller~\eqref{eq:uSO3}.

In terms of the unit quaternion $\textbf{q} = (q_0,\textbf{q}_v)$, $q_0\in\mathbb{R}$, $\textbf{q}_v\in\mathbb{R}^n$,
the attitude dynamics of a rigid body are given by
\begin{equation} \label{eq:quater}
\begin{aligned}
      J\dot{\omega} &= (J\omega)\times \omega + u + d \\
          \dot{q}_0 &= -\frac{1}{2}\textbf{q}_v\omega \\
 \dot{\textbf{q}}_v &= \frac{1}{2}(q_0I+\textbf{q}_v^\times)\omega  
\end{aligned} \;.
\end{equation}

Consider the sliding-mode controller with linear sliding variable
\begin{equation} \label{eq:contq}
\begin{aligned}
 \sigma_q(\textbf{q}_v,\omega) &= \textbf{q}_v + \omega \\
        u(\textbf{q}_v,\omega) &= -k_q \frac{\sigma_q(\textbf{q}_v,\omega)}{\|\sigma_q(\textbf{q}_v,\omega)\|}
\end{aligned} \;.
\end{equation}
As before, the gain $k_q$ is chosen large enough to enforce the sliding motion (details are not shown).

For concreteness, we take $J=\diag(3,4,5)$ as the inertia matrix. The disturbances
are given by
\begin{displaymath}
 d(t) =  
 \begin{pmatrix}
  \sin(5\pi t) & \cos(7\pi t) & \sin(9\pi t)
 \end{pmatrix}^\top \;.
\end{displaymath}
The gain~\eqref{eq:gain} is set as
\begin{equation} \label{eq:gainEx}
 K(\omega_\re,\omega) \ = 7\cdot\|\omega\|^2 + 2\cdot\|\omega_e\| + 1.8 \;,
\end{equation}
while we set $k_q=5$.

We consider the regulation problem $R_d = I$, $\omega_\rd = 0$. To illustrate the unwinding phenomenon, suppose that the
system starts at the desired attitude, $R(0) = I$. In terms of quaternions, the attitude is represented
either as 
\begin{displaymath}
 \textbf{q}_1 = 
 \begin{pmatrix}
  -1 & 0 & 0 & 0
 \end{pmatrix}^\top
 \quad \text{or} \quad
 \textbf{q}_2 = 
 \begin{pmatrix}
  1 & 0 & 0 & 0
 \end{pmatrix}^\top \;.
\end{displaymath}
Although they both represent the same attitude, $\textbf{q}_1$ is unstable while $\textbf{q}_2$ is stable. 
Allow us to initiate the system on the unstable equilibrium $\textbf{q}(0) = \textbf{q}_1$.

Figure~\ref{fig:unw} shows the trace of $R_\re$ (a measure of how close $R_\re$ is to the identity) and
$\|\omega\|$ for systems~\eqref{eq:error} and~\eqref{eq:quater}, under the action of the
controllers~\eqref{eq:uSO3} and~\eqref{eq:contq}, respectively. It can be seen that the 
control law~\eqref{eq:uSO3} keeps the attitude at the desired location, i.e., where 
$\tr(R_e)=3$ and $\|\omega\|=0$. On the other hand, even though system~\eqref{eq:quater}
starts at the desired attitude, the control~\eqref{eq:contq} drives the state away from 
the desired attitude and converges again after a full turn. 

\begin{figure}
\centering
 \includegraphics[width=0.85\columnwidth]{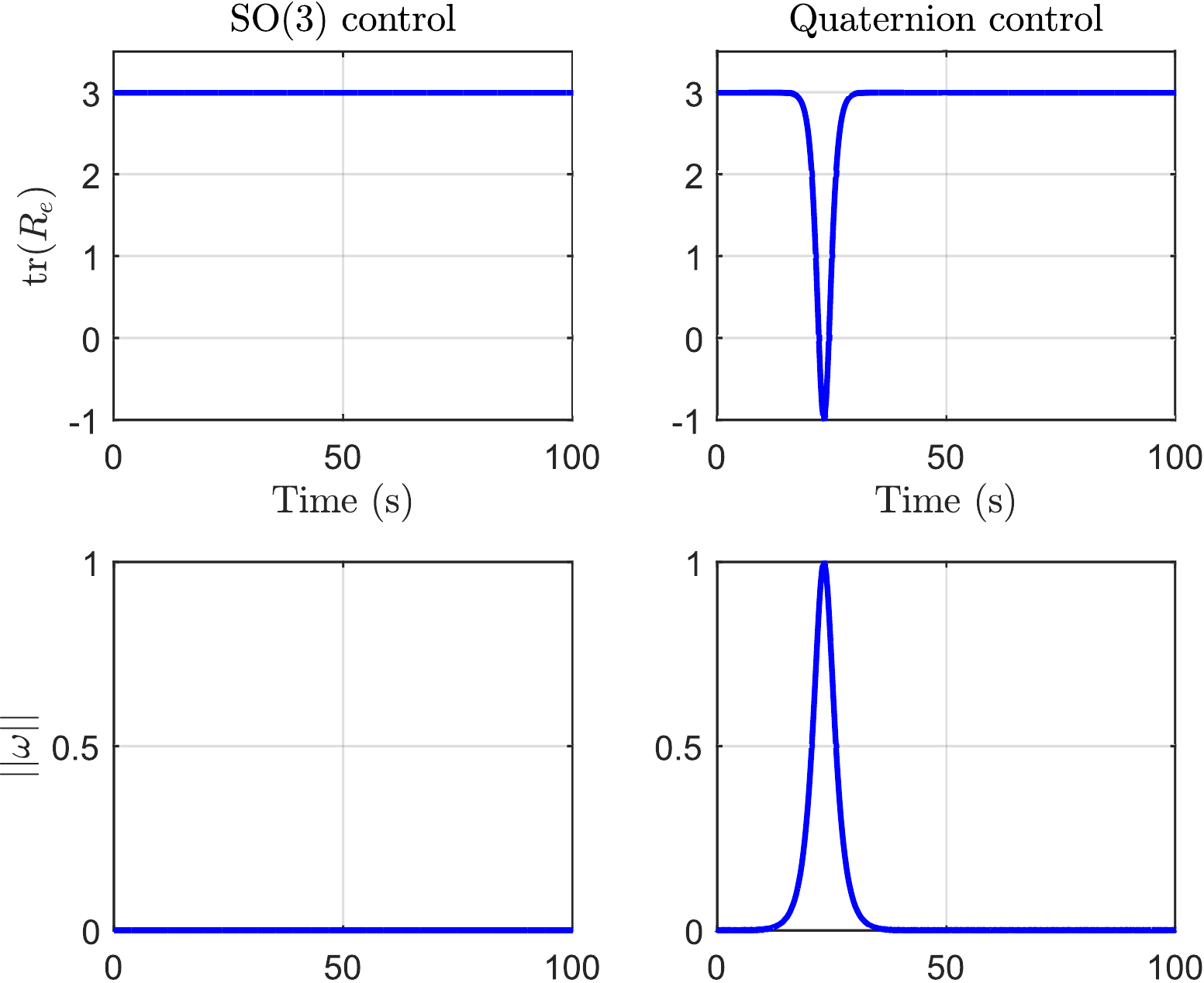}
 \caption{Comparison between sliding-mode attitude controllers. The controller based on
  quaternion representations exhibits the unwinding phenomenon, the one based on
  rotation matrices does not.}
\label{fig:unw}
\end{figure}

\subsection{Controller Performance for a Tracking Problem}

Now we consider a time-varying reference satisfying~\eqref{eq:desired} with 
the desired angular velocity shown in Figure~\ref{fig:ang_vel_track}. The figure
also shows the actual angular velocity for the system controlled
with~\eqref{eq:uSO3}. It can be seen that $\omega_e\to 0$. Figure~\ref{fig:u} shows 
the control torques. The trace of the attitude error matrix is shown in Figure~\ref{fig:trace}.
It shows that $\tr(R_e)\to 3$ which necessarily means that $R \to R_d$. 

\begin{figure}
\centering
\includegraphics[width=0.85\columnwidth]{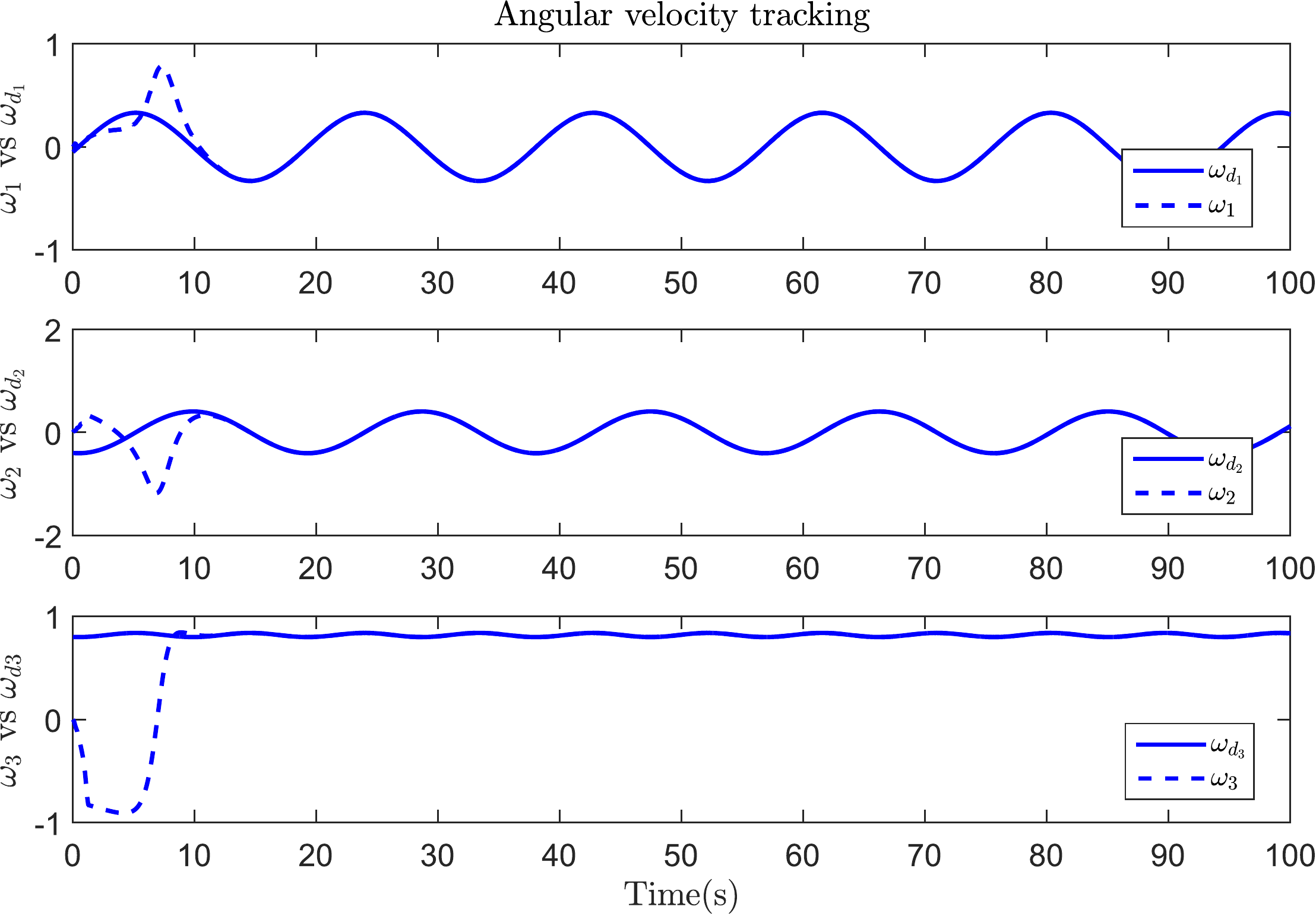}
\caption{Desired and actual angular velocities. Sliding-mode controller~\eqref{eq:uSO3} in $SO(3)$.}
\label{fig:ang_vel_track}
\end{figure}

\begin{figure}
\centering
\includegraphics[width=0.85\columnwidth]{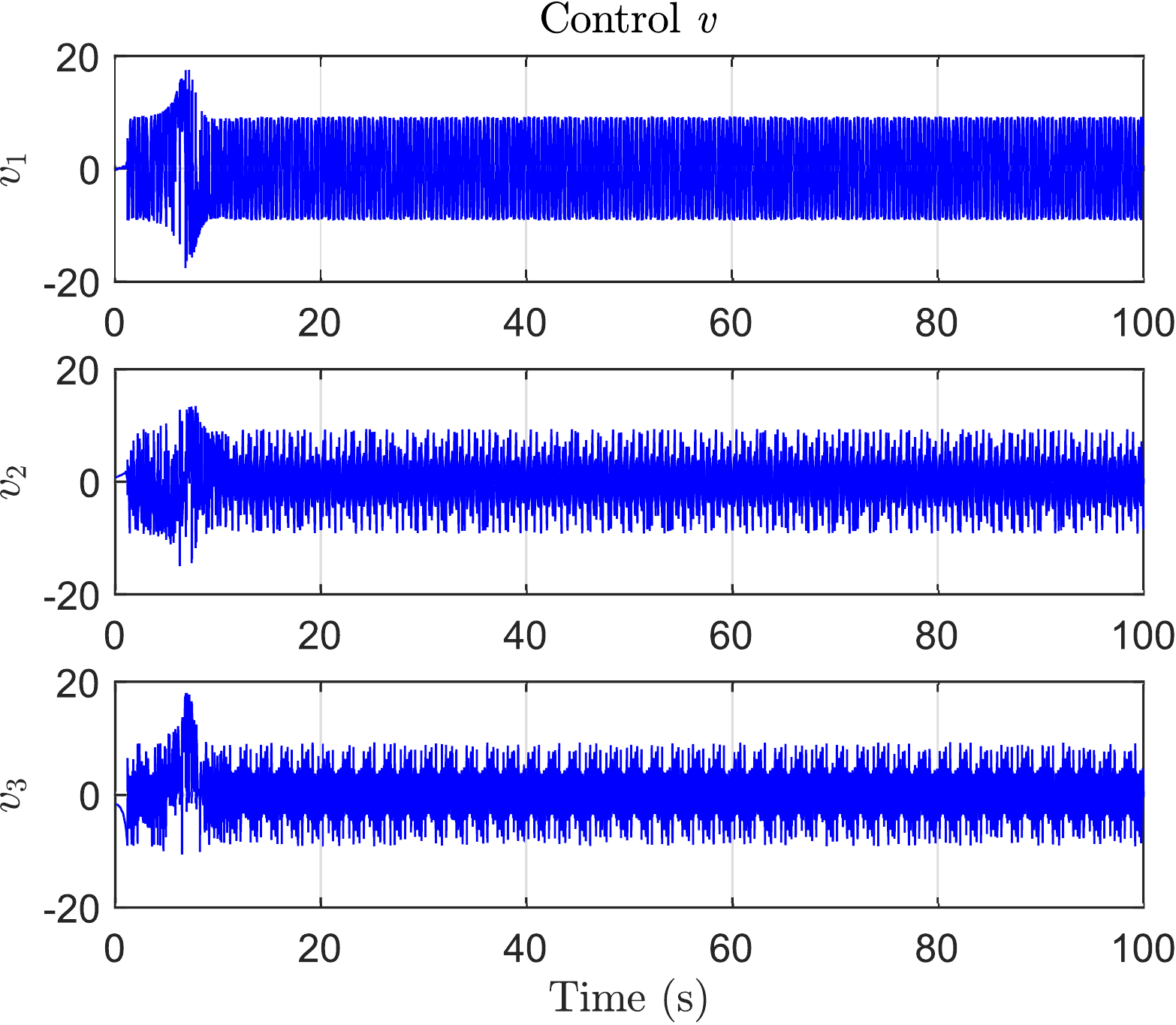}
\caption{Control torques $u$ produced by the control law~\eqref{eq:uSO3}.}
\label{fig:u}
\end{figure}

\begin{figure}
\centering
\includegraphics[width=0.85\columnwidth]{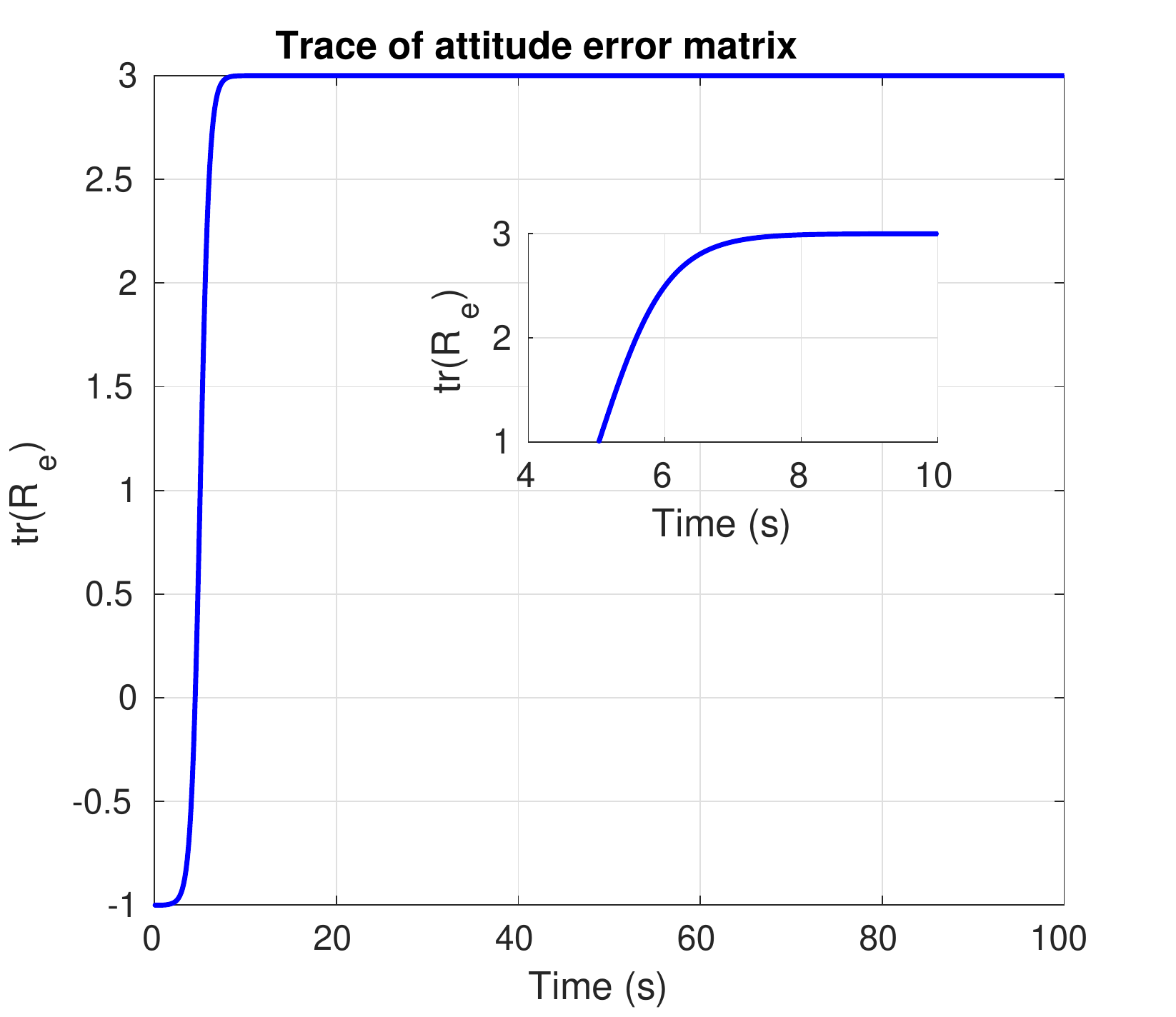}
\caption{Trace of attitude error matrix $R_e$ which shows that the closed-loop
 system~\eqref{eq:error},~\eqref{eq:uSO3} attains the required attitude, i.e.
 $R_e = R_d^T R \to I$.}
\label{fig:trace}
\end{figure}


\section{CONCLUSIONS AND FUTURE WORK}

\addtolength{\textheight}{-10.8cm}   


If the control objective is simply to specify the local behavior of a system evolving on
a manifold, then the choice of parametrization of the manifold is unimportant. If,
however, one is interested in specifying the global behavior and the system in question
evolves on a non-Euclidean manifold, then such choice becomes crucial. In the
specific attitude control problem, quaternions supersede Euler angles in that
there are no singularities, but still ail from the non-uniqueness problem. Rotational
matrices, on the other hand, are singularity free and unique.      

We have proposed a sliding surface defined in terms of rotational matrices. The resulting
controller is thus singularity free and does not induce the unwinding phenomenon. It
is shown that the reduced order system is almost globally asymptotically stable,
and that the sliding surface is in fact a Lie subgroup of the state space $SO(3)\times\RE^3$.
The latter property ensures the smoothness of the sliding surface and can be used as a
general design principle for systems that live on other Lie groups.  

\bibliographystyle{IEEEtran}
\bibliography{so3}      

\end{document}